\providecommand{\U}[1]{\protect\rule{.1in}{.1in}}
\newtheorem{theorem}{Theorem}
\newtheorem{corollary}[theorem]{Corollary}
\newtheorem{definition}[theorem]{Definition}
\newtheorem{lemma}[theorem]{Lemma}
\newtheorem{proposition}[theorem]{Proposition}
\newtheorem{problem}[theorem]{Problem}
\newenvironment{proof}[1][Proof]{\noindent\textbf{#1.} }{\ \rule{0.5em}{0.5em}}
\newcommand{\ab}{\partial_{\infty}}
\newcommand{\arcsec}{\rm{arcsec\,}}
\newcommand{\dive}{{\rm{\,div\,}}}
\newcommand{\Int}{\rm{Int}\,}
\newcommand{\dist}{{\rm{dist}\,}}
\begin{document}
\title{A note on minimal graphs over certain unbounded domains of Hadamard manifolds}
\author{Miriam Telichevesky}
\maketitle

\begin{abstract}
Given an unbounded domain $\Omega$ of a Hadamard manifold $M$, it  makes sense to consider the problem of finding minimal graphs with  prescribed continuous data on its cone-topology-boundary, i.e., on its  ordinary boundary together with its asymptotic boundary. In this  article it is proved that under the hypothesis that the sectional  curvature of $M$ is $\le -1$ this Dirichlet problem is solvable if  $\Omega$ satisfies certain convexity condition at infinity and if  $\partial \Omega$ is mean convex. We also prove that mean convexity of  $\partial \Omega$ is a necessary condition, extending to unbounded  domains some results that are valid on bounded ones.
\end{abstract}

\section{Introduction}\label{introd}

The classical theorem of Jenkins and Serrin on minimal graphs theory, following the works of Bernstein \cite{Bernstein}, Haar \cite{Haar}, Rad\'o \cite{Rado} and Finn \cite{Finn}, states the following.

\begin{theorem}[Theorem 1 of \cite{JS}] \label{thJS} Let $D\subset \mathbb{R}^{n}$ be a bounded domain whose boundary is of class $C^2$. Then the Dirichlet problem for the minimal surface equation in $D$ is well posed for $C^2$ boundary data if and only if the mean curvature of $\partial D$ is everywhere nonnegative.
\end{theorem}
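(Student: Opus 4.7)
The statement is an if-and-only-if, so I will handle the two implications separately; the geometric hypothesis on $H$ plays the same role in both directions, entering through a barrier calculation at the boundary. For sufficiency, the plan is the method of continuity: deform $\phi$ through $t\phi$, $t\in[0,1]$, and solve the corresponding Dirichlet problems $(P_t)$ for the minimal surface equation $\dive\bigl(Du/\sqrt{1+|Du|^2}\bigr)=0$ with boundary data $u=t\phi$ on $\partial D$. The set of solvable parameters contains $t=0$ and is open by the implicit function theorem in H\"older spaces, since the linearization of the MSE at any solution is a uniformly elliptic operator in divergence form. Closedness reduces, by standard quasilinear regularity, to uniform a priori $C^{1}(\overline D)$ bounds.

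The $C^0$ bound is immediate from the maximum principle (constants solve the MSE, so $\|u_t\|_\infty \le \|\phi\|_\infty$), and the interior gradient bound is supplied by the Bombieri--De Giorgi--Miranda estimate for nonparametric minimal graphs. The essential ingredient is therefore the boundary gradient estimate. My plan is to construct upper and lower barriers of the form
\[
w(x) = \phi(\pi(x)) + \psi(d(x))
\]
in a tubular neighborhood of $\partial D$, with $d$ the distance to $\partial D$, $\pi$ the nearest-point projection onto $\partial D$, and $\psi$ concave (for instance $\psi(s) = A\log(1+ks)$, with $A$, $k$ to be chosen). A direct computation of the minimal surface operator applied to $w$ produces, at leading order in $s=d(x)$, a term proportional to $-\psi'(d)H(\pi(x))$ coming from the second fundamental form of $\partial D$; choosing $A$ and $k$ sufficiently large makes $w$ a strict supersolution on a one-sided neighborhood of $\partial D$ precisely under the hypothesis $H\ge 0$. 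The comparison principle then gives $|Du|\le C$ on $\partial D$, uniformly in $t$.

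For the necessity direction, I argue by contradiction. Suppose the Dirichlet problem is solvable for every $C^2$ datum but $H(p_0)<0$ at some $p_0\in\partial D$. Using the same barrier calculation with the sign of $H$ reversed, one obtains explicit local minimal graphs (pieces of hemispheres or catenoids, suitably positioned on the concave side of $\partial D$) that can be compared with candidate solutions $u_M$ whose boundary data $\phi_M$ satisfy $\phi_M(p_0)=M$ and are supported in a shrinking neighborhood of $p_0$. As $M\to\infty$ the comparison principle forces $|Du_M(p_0)|\to\infty$, incompatible with the hypothesized existence of $C^2(\overline D)$ solutions for all admissible data. Hence $H\ge 0$ is necessary.

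The main obstacle is the boundary gradient estimate: producing the correct $\psi$ and verifying that the leading-order expansion of the MSE applied to $w$ picks out the mean curvature of $\partial D$ with the sharp constant under $H\ge 0$ is the technical heart of the argument, and the single place where the geometric hypothesis enters essentially. All remaining ingredients --- maximum principle, interior gradient bound, Schauder theory on the linearization, implicit function theorem --- are textbook quasilinear elliptic machinery.
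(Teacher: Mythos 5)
First, a remark on scope: the paper does not prove this statement at all --- it is quoted verbatim as Theorem 1 of \cite{JS} and used only as motivation. The closest the paper comes to proving any part of it is Section \ref{sectionnonexistence}, where the necessity argument of Jenkins--Serrin is generalized to unbounded domains of Hadamard manifolds via Lemma \ref{lemmaJS}, Lemma \ref{lema1} and Proposition \ref{prop2}. So your proposal can only be judged against the classical proof and against that generalization.

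Your sufficiency half is the standard route (method of continuity, $C^0$ bound by comparison with constants, reduction of the global gradient bound to the boundary gradient bound, and distance-function barriers $\phi\circ\pi+\psi(d)$ in which $H\ge 0$ enters through $\Delta d$), and as a sketch it is essentially correct; the only cosmetic difference from \cite{JS} is that they obtain the interior-to-boundary gradient reduction by applying the maximum principle to the differentiated equation rather than invoking Bombieri--De Giorgi--Miranda.

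The necessity half, however, has a genuine gap: the contradiction you extract is the wrong one, and the step ``the comparison principle forces $|Du_M(p_0)|\to\infty$'' is not justified. The correct mechanism --- and the one this paper itself reproduces and extends --- is an a priori bound on the \emph{value} $u(p_0)$, not on the gradient. Concretely, when $H(p_0)<0$ one builds a supersolution $w=A\psi(d)+\sup_{\partial B_s(p_0)\cap D}u$ on the thin region between a level set of $d$ and $\partial B_s(p_0)$, with $\psi$ bounded, $\psi'\to-\infty$ at $0$ (the $\arcsec$ barrier of \eqref{psi}); the comparison is then made not with the ordinary maximum principle but with the Jenkins--Serrin comparison lemma for a boundary portion on which $\partial w/\partial\nu=-\infty$ (Lemma \ref{lemmaJS}), yielding $u(p_0)\le C+\sup_{\partial B_s(p_0)\cap D}u$ with $C$ independent of the boundary datum near $p_0$. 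Combined with a bound of $\sup_{\partial B_s\cap D}u$ by the data away from $p_0$ (the role of Proposition \ref{prop2}), this shows that for $\phi_M(p_0)=M$ large the boundary value simply cannot be attained by any solution continuous up to the boundary --- the contradiction occurs already at the level of $C^0(\overline D)$, which is what the theorem requires, whereas your gradient blow-up would at best exclude $C^1(\overline D)$ solutions and, as stated, does not follow from the comparison you describe. You should replace the gradient blow-up step by this uniform upper bound on $u(p_0)$, and make explicit the generalized comparison lemma that allows the barrier to have infinite normal derivative on the offending boundary portion; without it the comparison with your local supersolutions cannot be closed.
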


In the last four decades, several works considered problems related to Theorem \ref{thJS} in distinct directions. Some of them are listed below togheter with some references. 
\begin{itemize}
\item Unbounded domains of $\mathbb{R}^2$: \cite{Hw,CK,RoSa,RTo,Kurst, Kuwert, Kutev-Tomi}.
\item Bounded domains of a Hadamard manifold $M$: \cite{FR, MRR, ARS}.
\item Asymptotic Dirichlet problems on Hadamard manifolds: \cite{EFR, RT2, GR,CHR}.
\item Replace the ambient space $\mathbb{R}^{n+1}$ by the hyperbolic spaces $\mathbb{H}^{n+1}$ \cite{BS, GS, L2001b, Nitsche} or other ambient spaces with a Killing field satisfying certain hypotheses \cite{AD,DHL,DLR}. In this setting it is natural to consider CMC Killing graphs and there is an extensive bibliography on it.
\end{itemize}

The purpose of this article is to prove that similar existence and nonexistence results remain valid if in Theorem \ref{thJS} $\mathbb{R}^n$ is replaced by a Hadamard manifold $M$ with sectional curvature $K_M \le -1$ and the domain $D$ is unbounded and ``strictly convex at infinity'', see Definition \ref{defsc}.

Classically Dirichlet problems on unbounded domains are considered in $\mathbb{R}^n$ without prescribed values at infinity. In fact, sometimes the behavior at infinity of bounded solutions are determined by their boundary values. For instance, in $\mathbb{R}^2$ it is a consequence of Theorem 2 of \cite{CK}, that states that if $u$ and $v$ are distinct solutions of the Dirichlet problem in an unbounded domain $U\subset \mathbb{R}^2$ that coincide on $\partial U$, then $\sup |u-v|$ must have at least logarithm growth. However, since the manifolds that we consider in this work have sectional curvature $K_M \le -1$, it turns out that the asymptotic boundary of unbounded domains may be ``good enough'' to prescribe continuous data on them. It therefore makes sense to consider the generalized Dirichlet problem for the minimal hypersurface equation, Problem \ref{dpr}, described in the sequel. In order to state it, let us introduce some useful notations that are not standard.

Throughout this paper $M$ denotes a $m$-dimensional Hadamard manifold, $m\ge 2$, with sectional curvature $K_M$ satisfying $K_M \le -1$. The asymptotic boundary $\ab M$ of $M$ is defined by the set of equivalence classes of geodesic rays that stay at finite distance for all time and it is possible to compactify $M$ adding $\ab M$ to it; $\overline{M}:=M\cup \ab M$ carries the so-called cone topology, see \cite{EO}, which makes it canonically homeomorphic to a closed ball. If $U\subset \overline{M}$ is any set, we denote by $\overline{U}^{ct}\subset\overline{M}$ and $\partial ^{ct}U\subset \overline{M}$ its closure and boundary in terms of cone topology; we also use the notation $\ab U:=\partial^{ct} U \cap \ab M$.

\begin{problem}\label{dpr}
Let $\Omega\subset M$ be a $C^2$ domain of $M$. Given $\varphi \in {C}(\partial^{ct} \Omega)$, find a minimal graph over $\Omega$ that atains $\varphi$ on its boundary, or, equivalently, to find a solution of the Dirichlet problem
$$
\left\{\begin{array}{l}u\in {C}^2(\Omega)\cap {C}\left(\overline{\Omega}^{ct}\right),\\\mathcal{M}(u):=\dive\left(\displaystyle\frac{\nabla u}{\sqrt{1+|\nabla u|^2}}\right) = 0 \text{ in }\Omega,\\ u|_{\partial^{ct} \Omega} = \varphi.\end{array}\right.
$$
\end{problem}

\

Concerning the existence part, maybe the main difficulty to deal with unbounded domains is the nonexistence of natural barriers. In general, barriers are constructed using distance function to a point or to the boundary of the domain, which is impossible to adapt directly to points at infinity. The geometry of $M$ at infinity plays an important role at this point. For instance, the hyperbolic spaces $\mathbb{H}^n$ have ``good geometry'' at infinity by the existence of hyperspheres separating points at infinity and having their principal curvatures with the correct sign. The natural way to generalize this fact in order to use Hessian's comparison Theorem and adapt barriers of $\mathbb{H}^n$ to another Hadamard manifolds is given by the {\em strict convexity condition (SC condition) at infinity}, introduced in \cite{RT2}. In that work it is proved that Problem \ref{dpr} is solvable for $\Omega = M$ (in this case, it is called the Asymptotic Dirichlet Problem) and for any continuous boundary data if $M$ satisfy the SC condition, described below.

\begin{definition}\label{defsci}
A Hadamard manifold $M$ is said to satisfy the strict convexity condition at infinity if for all $x\in \ab M$ and all relatively open subset $W\subset \ab M$, there exists $\Omega \subset M$ of class $C^2$ such that $M\backslash \Omega$ is convex.
\end{definition}

At this point, it should be mentioned that under the hypothesis $K_M \le -1$, the SC condition is always satisfied by $2$-dimensional manifolds, by the rotationally symmetric ones and by those manifolds with controled decay on sectional curvature (exponential decay), see also \cite{RT2}. We also should mention that under the same assumption on $K_M$, the SC condition is equivalent to the {\em convex conic neighborhood condition} presented by H. Choi in \cite{Choi} in the study of the asymptotic Dirichlet problem with respect to Laplace's operator on Cartan-Hadamard manifolds; the equivalence is a consequence of Lemma 1 of the work \cite{B98-2} of A. Borb\'ely. In fact, both Dirichlet problems are closely related and may be studied together, see also \cite{RT2}.

Contrasting with the existence results under SC condition, we cite the counterexample constructed in \cite{HR} by I. Holopainen and J. Ripoll. In this work the authors present a Hadamard manifold with $K_M \le -1$ that does not admit solution to the asymptotic Dirichlet problem for minimal hypersurface equation for any continuous $\varphi\in C(\ab M)$, although there are bounded non-constant minimal graphs globally defined on $M$ , see Theorem 1.1 of \cite{HR}. This counterexample proves that the condition $K_M \le -1$ is not sufficient to solve Problem \ref{dpr} with any continuous boundary data. 

Taking account all these facts, the following definition is natural.

\begin{definition}\label{defsc}
A domain $\Omega\subset \overline{M}$ is called {\em strictly convex at infinity} if for any $x\in \ab \Omega$ and any $\Gamma\subset \partial^{ct} \Omega$ relatively open neighborhood of $x$ there exists $V=V(x,\Omega)\subset \Omega$ open neighborhood of $x$ such that $\overline{V}\cap \partial \Omega \subset \Gamma$ and all the principal curvatures of $\partial V \cap \Omega$, oriented in the direction of $\Omega \backslash V$, are non-negative.
\end{definition}

We notice that when $\Omega=M$, this definition coincides with SC condition.

\

With Definition \ref{defsc} it is possible to state our main existence result.

\begin{theorem}\label{thmexistence}
Let $\Omega \subset M$ be a mean-convex domain (with respect to the inward orientation) that is strictly convex at infinity. Then Problem \ref{dpr} is solvable for any continuous boundary data.
\end{theorem}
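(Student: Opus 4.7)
The plan is to use Perron's method via exhaustion by bounded smooth mean-convex subdomains on which the Dirichlet problem is already known to be solvable, pass to a $C^{2}_{\loc}$ limit using interior estimates for minimal graphs, and then establish continuity up to $\partial^{ct}\Omega$ by producing upper and lower barriers at every boundary point, of two different kinds according to whether the point is finite or at infinity.

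First, fix $o\in \Omega$ and set $\Omega_n:=\Omega\cap B(o,n)$. Because geodesic balls in a Hadamard manifold are strictly convex, $\Omega_n$ is mean-convex (after a harmless smoothing of corners), and $\partial^{ct}\Omega_n$ exhausts $\partial^{ct}\Omega$ in the cone topology. Extend $\varphi$ to a continuous function on $\overline{\Omega}^{ct}$ and restrict it to $\partial \Omega_n$; the existence theorems for the Dirichlet problem on bounded mean-convex domains of Hadamard manifolds (\cite{FR,MRR,ARS}) furnish solutions $u_n\in C^2(\Omega_n)\cap C(\overline{\Omega_n})$ of $\mathcal{M}(u_n)=0$ matching $\varphi$ on $\partial\Omega_n$. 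Comparison with the constants $\pm\|\varphi\|_\infty$ gives a uniform sup-norm bound, and standard interior gradient estimates for the minimal graph equation together with elliptic regularity produce a subsequence converging in $C^2_{\loc}(\Omega)$ to a bounded solution $u\in C^2(\Omega)$ of $\mathcal{M}(u)=0$.

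It remains to verify that $\lim_{x\to p}u(x)=\varphi(p)$ for every $p\in\partial^{ct}\Omega$. For a finite point $p\in\partial\Omega$, mean convexity of $\partial\Omega$ together with $K_M\le -1$ allows the standard construction of upper and lower barriers of the form $\varphi(p)\pm\varepsilon\pm C(f\circ d)$, where $d$ is the signed distance to $\partial\Omega$ and $f$ is a suitably concave function; this is carried out as in \cite{FR,MRR,ARS}, and forces continuity of $u$ at $p$. For an asymptotic point $p\in\ab\Omega$ and $\varepsilon>0$, pick a relatively open neighborhood $\Gamma\subset\partial^{ct}\Omega$ of $p$ on which $|\varphi-\varphi(p)|<\varepsilon$, and let $V=V(p,\Omega)$ be the open neighborhood of $p$ given by Definition \ref{defsc}. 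Because the principal curvatures of $\partial V\cap \Omega$ are nonnegative in the direction of $\Omega\setminus V$, the role played by the complement of a convex subset of $M$ in \cite{RT2} is now played by $V$ itself, and the Hessian comparison theorem (exploiting $K_M\le -1$) yields a supersolution $w\in C^2(V)\cap C(\overline{V}^{ct})$ with $\mathcal{M}(w)\le 0$ in $V$, $w\ge \varphi$ on $\partial^{ct}V$, and $w(p)=\varphi(p)+\varepsilon$. Applying the comparison principle to $u_n$ and $w$ on $V\cap \Omega_n$, then letting $n\to\infty$ and $\varepsilon\to 0$, gives $\limsup_{x\to p}u(x)\le \varphi(p)$; a symmetric lower barrier gives the reverse inequality.

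The main obstacle is this last step. The asymptotic-barrier technology of \cite{RT2} is formulated for the complement of a globally convex set in $M$; here one must adapt it to a general subdomain $V\subset \Omega$ whose cone-topology boundary splits into a convex interior piece $\partial V\cap \Omega$ and an asymptotic piece $\partial^{ct}V\cap \ab M\subset \Gamma$. One has to check that the barrier, built as a concave function of the distance to $\partial V\cap \Omega$, simultaneously satisfies $\mathcal{M}(w)\le 0$ (using the correct sign of the principal curvatures of $\partial V\cap \Omega$ provided by Definition \ref{defsc} together with Hessian comparison under $K_M\le -1$) and dominates $\varphi$ on $\partial V\cap \Omega$ while still assuming the value $\varphi(p)+\varepsilon$ at $p$. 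Once this barrier is in place the Perron-type comparison on the exhaustion closes the argument.
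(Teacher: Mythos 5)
Your proposal is correct and follows essentially the same route as the paper: exhaustion by bounded mean-convex domains, interior gradient estimates of Spruck to pass to a $C^2_{\loc}$ limit, standard barriers at finite boundary points from mean convexity, and at asymptotic points a supersolution built as a decreasing function of the distance to the convex hypersurface $\partial V\cap\Omega$ supplied by strict convexity at infinity, with Hessian comparison under $K_M\le -1$ giving $\mathcal{M}(w)\le 0$. The only cosmetic differences are that the paper first reduces to smooth boundary data (so as to invoke Theorem 2 of \cite{DHL} on the exhaustion) and writes the barrier explicitly as $g(s)=\int_s^{+\infty}\bigl(\cosh^{2(n-1)}t-1\bigr)^{-1/2}\,dt$ truncated at height $\max|\varphi|$, which is exactly the adaptation you identify as the main obstacle.
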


Returning our attention to Theorem \ref{thJS}, when $\Omega\subset \mathbb{R}^n$ is bounded, the mean-convexity is a necessary condition to the solvability of Problem \ref{dpr} for any continuous $\varphi$. The second part of this article is dedicated to prove that mean convexity is also necessary in $M$ if we deal with unbounded domains and require boundedness of solutions. In Section \ref{sectionnonexistence} we present some necessary lemmata and the proof of the following nonexistence result.	

\begin{theorem}\label{thmnonexistence}
Let $\Omega\subset \overline{M}$ be a domain and suppose that there exist $y\in \partial \Omega$ such that the mean curvature of $\partial \Omega$ at $y$ (w.r.t. the inward orientation) satisfies $H(y)<0$. Then there exists a continuous function $\varphi:\partial^{ct} \Omega \to \mathbb{R}$ such that Problem \ref{dpr} is not solvable.
\end{theorem}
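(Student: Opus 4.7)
My plan is to adapt the classical Finn--Serrin nonexistence argument to the Hadamard setting. The strategy is to construct a local bounded supersolution of $\mathcal{M}=0$ near $y$ using the hypothesis $H(y)<0$, and then use it to show that continuous boundary data with a sufficiently large peak at $y$ cannot be attained by any putative solution.

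\textbf{Step 1 (Localization and barrier ODE).} By the $C^2$-regularity of $\partial\Omega$ and the continuity of $H$, fix $r>0$ and $h_0>0$ such that $H\le-h_0$ on $\Sigma:=\partial\Omega\cap B_r(y)$. Let $d$ denote the signed distance to $\Sigma$, positive into $\Omega$. For $\delta>0$ small the one-sided tube $N_\delta:=\{p\in\Omega\cap B_r(y):d(p)<\delta\}$ carries a smooth $d$, and the parallel hypersurfaces $\Sigma_\rho:=\{d=\rho\}$ also satisfy $H\le-h_0/2$; equivalently, $\Delta d$ takes values in a compact interval $[c_0,C_0]\subset(0,\infty)$ on $\overline{N_\delta}$. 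For $w=f(d)$ one computes
$$\mathcal{M}(w)=\frac{f''}{(1+f'^2)^{3/2}}+\frac{f'}{\sqrt{1+f'^2}}\,\Delta d,$$
so the supersolution condition $\mathcal{M}(w)\le0$ follows, for $f'\ge0$, from the ODE inequality $f''\le-C_0 f'(1+f'^2)$. Taking $f$ as the solution of $f''=-C_0 f'(1+f'^2)$ with $f(0)=0$ and $f'(0)=A>0$, explicit integration of $g=f'$ (which solves $g'=-C_0 g(1+g^2)$) shows that $f$ is concave, strictly increasing, and uniformly bounded on $[0,\infty)$ by $L:=\pi/(2C_0)$, independently of $A$. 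Thus $w=f(d)$ is a bounded supersolution on $N_\delta$ with $w|_\Sigma\equiv0$ and $0\le w\le L$ everywhere.

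\textbf{Step 2 (Choice of boundary data and contradiction).} Pick a continuous $\varphi:\partial^{ct}\Omega\to\mathbb{R}$ with $\varphi\ge0$, $\varphi(y)=T$ for some large $T$ to be chosen, and $\varphi$ supported in a small neighborhood of $y$ contained in $\Sigma'\Subset\Sigma$; arrange $\varphi\equiv0$ on $\ab\Omega$ and on $\partial\Omega\setminus\Sigma'$. Suppose a solution $u$ of Problem \ref{dpr} exists. By the maximum principle, $0\le u\le T$, and crucially on any compact subset of $\overline{\Omega}^{ct}\setminus\{y\}$ the value of $u$ is bounded by a constant $K$ independent of $T$: this follows from continuity of $u$ up to $\partial\Omega\setminus\Sigma'$ (where $\varphi\equiv0$) combined with interior barrier/gradient estimates for minimal graphs on Hadamard manifolds with $K_M\le-1$, as used in the existence theory of \cite{FR,ARS,MRR}. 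Compare $u$ with the shifted supersolution $w+K$ on a sub-tube $N'_\delta$ anchored over $\Sigma'$, after excising a small ball $B_\eta(y)$. On the boundary of $N'_\delta\setminus B_\eta(y)$ we have $u\le K\le w+K$ on the interior faces (by the uniform estimate) and on $\Sigma'\setminus B_\eta(y)$ (where $\varphi$ vanishes for $\eta$ small); the comparison principle for $\mathcal{M}$ therefore yields $u\le w+K\le L+K$ on $N'_\delta\setminus B_\eta(y)$, and passing to the limit $\eta\to0$ together with continuity of $u$ gives $u(y)\le L+K$. Choosing $T>L+K$ contradicts $u(y)=\varphi(y)=T$.

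\textbf{Main obstacle.} The most delicate point is the control of $u$ on the excised sphere $\partial B_\eta(y)\cap\Omega$, where the only a priori bound coming directly from the maximum principle is the uninformative $u\le T$. The remedy is either to iterate over shrinking $\eta$ together with an appropriate shrinking family of boundary bumps (a Perron-type exhaustion near $y$), or to replace the radial barrier $w$ by a cutoff variant that interpolates between the catenoidal profile $f(d)$ and the trivial supersolution $0$ as one moves away from $y$, absorbing the error into a modest enlargement of $L$. This matches the classical Jenkins--Serrin necessity argument in the Euclidean bounded case; the input specific to the Hadamard setting is the explicit formula relating $\Delta d$ to the mean curvatures of parallel hypersurfaces, which translates $H(y)<0$ into the quantitative cap $L=\pi/(2C_0)$ on the range of any supersolution emanating from $\Sigma$.
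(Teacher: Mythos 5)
Your overall strategy is the classical one and your Step 1 computation (the profile $f$ with $g=f'$ solving $g'=-C_0g(1+g^2)$, giving the uniform cap $\arctan(A)/C_0<\pi/(2C_0)$) is correct, but the proof has a genuine gap exactly at the point you flag as ``the most delicate point,'' and neither of your proposed remedies is worked out or obviously workable. Because your barrier has finite normal derivative $f'(0)=A$ on $\partial\Omega$, the comparison $u\le w+K$ must be checked on the portion of $\partial\Omega$ near $y$ where $\varphi$ is large --- precisely where it fails --- and excising $B_\eta(y)$ only trades this for the uncontrolled sphere $\partial B_\eta(y)\cap\Omega$, where the maximum principle gives nothing better than $u\le T$. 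The paper closes this loop differently: it never compares $u$ and the barrier on $\partial\Omega$ at all. Lemma \ref{lemmaJS} (the Jenkins--Serrin lemma) lets one run the comparison on the region $\Omega_x$ between an \emph{interior} level set $\Gamma_x$ of $d$ and the hemisphere $\partial B_s(y)\cap\Omega$, provided the supersolution has normal derivative $-\infty$ on $\Gamma_x$; this is arranged by using the profile $\psi(t)=\pi/2-\arcsec(t+1)$, whose derivative blows up at $t=0$ (morally your $A=\infty$ limit). The output of Lemma \ref{lema1} is then $u(y)\le A\pi/2+\sup_{\partial B_s(y)\cap\Omega}u$, with the boundary data near $y$ never entering. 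If you want to salvage your write-up, you should replace the finite-$A$ barrier and the excision by this infinite-derivative barrier together with Lemma \ref{lemmaJS}.

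There is a second gap you pass over too quickly: the claim that $u\le K$ on compact sets away from $y$ with $K$ independent of $T$. Interior gradient estimates bound $|\nabla u|$ in terms of $\sup|u|$; they do not bound $u$ itself independently of the peak height, and on unbounded domains such a bound is genuinely delicate (in $\mathbb{R}^2$ it fails --- solutions agreeing on the boundary can differ by logarithmically growing amounts). This is exactly the content of Proposition \ref{prop2}, which builds a \emph{global} bounded supersolution $B\psi(r)+\sup_{\partial^{ct}\Omega\setminus B_s(y)}u$ on all of $\Omega\setminus B_s(y)$, using $\Delta r\ge n-1$ from the Hessian comparison theorem under $K_M\le-1$, and again Lemma \ref{lemmaJS} with $\Gamma=\partial B_s(y)\cap\Omega$. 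This is where the curvature hypothesis and the paper's main technical improvement (a barrier of height independent of the size of the domain) actually live; your proposal locates the role of $K_M\le-1$ only in the local tube computation, which is not where it is needed.
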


The construction of $\varphi$ needs basically two ingredients: First of all, the local aspect concerning about the negativity of the mean curvature $H(y)$. It is essential to guarantee that $\varphi(y)$ is bounded by values of the solution on a small sphere centered at $y$, say, on $S_r(y)\cap \Omega$. The second essential ingredient is the existence of a bounded barrier in  $\Omega\setminus B_r(y)$ with some special properties. Similar results outside $\mathbb{R}^n$ were proved on bounded domains considering barriers depending on the diameter of $\Omega$, as in \cite{Nitsche}. Our main improvement is to drop the dependence on the size of the domain.

Combining the results of Theorems \ref{thmexistence} and \ref{thmnonexistence}, we get 

\begin{theorem}\label{iff}
Let $\Omega \subset M$ be a domain that is strictly convex at infinity. Then the Dirichlet problem \ref{dpr} is solvable for any continuous boundary data if and only if $\Omega$ is mean convex.
\end{theorem}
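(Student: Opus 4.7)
The plan is to derive Theorem~\ref{iff} directly from Theorems~\ref{thmexistence} and~\ref{thmnonexistence} by splitting the equivalence into its two halves. With the standing hypothesis that $\Omega$ is strictly convex at infinity already in place, the two directions land cleanly on the existence and nonexistence results, so essentially no fresh argument is required.

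First I would dispatch the ``if'' direction. Assuming $\Omega$ is mean convex with respect to the inward orientation, both hypotheses of Theorem~\ref{thmexistence} (strict convexity at infinity and inward mean convexity) are in force, and that theorem immediately supplies a solution of Problem~\ref{dpr} for every continuous boundary datum on $\partial^{ct}\Omega$.

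For the ``only if'' direction, I would argue by contrapositive. Suppose $\Omega$ is not mean convex; then there exists some $y\in\partial\Omega$ at which the mean curvature of $\partial\Omega$ with respect to the inward normal satisfies $H(y)<0$. Theorem~\ref{thmnonexistence} -- whose statement, it is worth noting, does not even require strict convexity at infinity -- then produces a continuous function $\varphi:\partial^{ct}\Omega\to\mathbb{R}$ for which Problem~\ref{dpr} admits no solution. This contradicts the assumed universal solvability and so forces $\Omega$ to be mean convex.

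No real obstacle arises, since all the analytic difficulty has already been absorbed into the two earlier theorems. The only thing one has to verify is that the notion of mean convexity used in Theorem~\ref{iff} (namely $H\geq 0$ pointwise on $\partial\Omega$ with respect to the inward orientation) is exactly the pointwise negation of the hypothesis $H(y)<0$ driving Theorem~\ref{thmnonexistence}. Once this sign convention is matched, the two theorems snap together into the claimed equivalence.
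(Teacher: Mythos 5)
Your proposal is correct and matches the paper exactly: Theorem~\ref{iff} is stated there as an immediate combination of Theorems~\ref{thmexistence} and~\ref{thmnonexistence}, with the ``if'' direction coming from the existence theorem and the ``only if'' direction from the nonexistence theorem via contrapositive, just as you argue.
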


\

It remains an open question what happens if we assume that $\Omega$ is not strictly convex at infinity. We conjecture that in this case it is also possible to construct a continuous function on $\partial^{ct} \Omega$ for which the Dirichlet problem is not solvable and therefore strict convexity at infinity is also a necessary condition. Since it deals with non existence of solutions in arbitrarly large domains, Theorem \ref{thmnonexistence} may have an important role on the study of this conjecture. 

\

To finish, we should mention that there is a big gap between the behavior of $K_M$ at infinity on the cases where Theorem \ref{thmexistence} is true and on the ones where it is false. It also remains unknown if there exist some sharp condition on $K_M$ that assures solvability of Problem \ref{dpr} for any continuous boundary data.

\section{Existence result}\label{existence}

This section is dedicated to prove Theorem \ref{thmexistence}. We start with a very important tool, the comparison principle for unbounded domains. It plays an important role in both existence and uniqueness parts
.
At this moment we just need to work with functions that extend continuously to the asymptotic boundary, however in Section \ref{sectionnonexistence} we treat with a larger class of functions, as stated above.

\begin{proposition}[Comparison Principle for unbounded domains]
Let $U\subset M$ be an un\-boun\-ded domain of $M$. If $u,v\in C^2(U)$ are such that $\mathcal{M}(v) \le \mathcal{M}(u)$ on $U$ with $\limsup_{p\to x} u\le \liminf_{p\to x} v$ for all $x\in\partial^{ct} U$, then $u\le v$ in $U$.\label{comparison}  
\end{proposition}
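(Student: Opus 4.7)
My approach is the standard quasilinear maximum-principle argument, with the unboundedness of $U$ handled by the compactness of $\overline{M}$ in the cone topology. Suppose for contradiction that $\delta := \sup_U (u-v) > 0$ and pick a sequence $(p_n)\subset U$ with $(u-v)(p_n)\to \delta$. Since $\overline{M}$ is compact in the cone topology, a subsequence (still denoted $(p_n)$) converges to some $p^\ast \in \overline{U}^{ct}$. Because $U$ is open in $\overline{M}$, this limit lies either in $U$ or in $\partial^{ct}U$, which splits the argument into two cases.

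First I would dispose of the case $p^\ast \in \partial^{ct}U$: the hypothesis on the boundary data gives
$$\delta = \lim_{n\to\infty}(u-v)(p_n)\le \limsup_{p\to p^\ast}u(p) - \liminf_{p\to p^\ast}v(p)\le 0,$$
a contradiction. This step crucially uses that $\partial^{ct}U$ encompasses the asymptotic part $\ab U$, so that escape to infinity along $(p_n)$ is already controlled by the assumed inequality on $\partial^{ct}U$.

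If instead $p^\ast\in U$, then by continuity $(u-v)(p^\ast)=\delta$, so $w:=u-v$ attains a strictly positive interior maximum at $p^\ast$. To finish I would linearize $\mathcal{M}$ along the segment $v+t(u-v)$, $t\in[0,1]$, producing a linear second-order operator $L$ with continuous coefficients such that $Lw = \mathcal{M}(u)-\mathcal{M}(v)\ge 0$ on $U$. Since the principal symbol of $\mathcal{M}$ is positive definite, $L$ is elliptic (uniformly on relatively compact subsets), so the strong maximum principle forces $w\equiv\delta$ on the connected component of $U$ containing $p^\ast$, hence on all of $U$. Letting $p$ approach any $x\in\partial^{ct}U$ then yields $\delta\le 0$, contradicting the choice of $\delta$.

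The main obstacle is the potential non-attainment of the supremum: since $U$ is unbounded, $u-v$ a priori need not reach $\delta$ anywhere, and maximizing sequences may escape every compact subset of $M$. The cone-topology compactification resolves this by providing a compact ambient space $\overline{M}$ in which such sequences always cluster, and the hypothesis that already controls $u$ and $v$ on all of $\partial^{ct}U$---finite boundary points and asymptotic boundary points alike---then forces a contradiction regardless of where the limit of the maximizing sequence lies.
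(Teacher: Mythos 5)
Your proof is correct, but it follows a genuinely different route from the paper's. The paper does not argue by contradiction at an interior maximum: instead, for each $\varepsilon>0$ it converts the boundary hypothesis at every $x\in\ab U$ into an open truncated cone $N_x$ on which $u<v+\varepsilon$, uses compactness of $\ab U$ to extract a uniform radius $R$ with $u<v+\varepsilon$ on $U\setminus B_R(o)$, and then applies the \emph{bounded-domain} comparison principle as a black box on $U\cap B_R(o)$ before letting $\varepsilon\to 0$. Your version replaces that reduction by a self-contained maximum-principle argument: a maximizing sequence for $u-v$ clusters in the compact space $\overline{M}$, the boundary case is killed directly by the hypothesis, and the interior case is handled by linearizing $\mathcal{M}$ along $v+t(u-v)$ and invoking the strong (Hopf) maximum principle for the resulting elliptic operator $L$ with no zeroth-order term, together with connectedness of the domain. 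What the paper's approach buys is economy -- it reuses the known bounded comparison principle and never needs the strong maximum principle or the linearization; what yours buys is independence from that black box, at the cost of re-proving essentially the quasilinear comparison machinery inline. Both arguments share the same implicit caveat: the step ``$\limsup u-\liminf v\le 0$'' (in your case) and the construction of $N_x$ with $u<v+\varepsilon$ (in the paper's) tacitly assume the one-sided limits at boundary points are not simultaneously infinite of the same sign; this is the standard reading of such statements and is not a defect specific to your write-up.
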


\begin{proof}
Choose $o\in M$. Let $\varepsilon>0$ be an arbitrary constant. Using the basis of the cone topology of $\overline{M}$, we obtain that for all $x\in \ab  U$, there is an open truncated cone $N_x:= T_o(x,\alpha_x,R_x)$ (that is, the image of a truncated cone of opening angle $\alpha_x$ and radius $R_x$ by the exponential map of some point $o$) such that $u<v+\varepsilon$ on $N_x$. Since $ \ab U$ is compact, there exists uniform $R$ such that $u<v+\varepsilon$ on $U\backslash B_R(o)$. In addition, notice that the hypothesis implies that $u\le v$ on $\partial U $. Therefore we have $u\le v+\varepsilon$ on $\partial (U\cap B_R(o))$, which imply, by the Comparison Principle on bounded domains, that $u\le v+\varepsilon $ on $U\cap B_R(o)$, and hence the last inequality holds on $U$. Since $\varepsilon$ is arbitrary, the proof is complete.
\end{proof}


In the sequel, we prove Theorem \ref{thmexistence}.

\

We first claim that if Problem \ref{dpr} admits solution for all boundary data in $C^{\infty}(\partial \Omega)\cap C(\partial^{ct} \Omega),$ then it admits solution if $\varphi \in C(\partial^{ct}\Omega)$. Indeed, we consider increasing and decreasing sequences $\{\varphi_n^-\}_{n}, \, \{\varphi_n^+\}_{n} \subset C^{\infty}(\partial \Omega)\cap C^0(\partial^{ct}\Omega)$,  respectively, converging to $\varphi$. By assumption, for each $n$ we obtain $u_n^{-},u_n^{+}\in C^2(\Omega)$ solutions of $\mathcal{M}=0$ in $\Omega$ and with $u_n^-=\varphi_n^-$, $u_n^{+}=\varphi_n^+$ on $\partial^{ct} \Omega$. By the Comparison Principle $\{u_n^-\}_n$ is increasing and $\{u_n^+\}_n$ is deacreasing with $n$. Interior gradient estimates of \cite{Spruck} guarantee that we may extract converging on compacts subsets subsequences of them and then obtain $C^2$ functions $u^-\le u^+$ ,solutions of $\mathcal{M}=0$ in $\Omega$, with boundary values given by $\varphi$. Comparison Principle implies that $u^-=u^+$: that is the desired $u$.

From now on, we will assume w.l.g. that $\varphi\in C^{\infty}(\partial \Omega) \cap C(\partial ^{ct}\Omega).$

Let $\Omega_1 \subset \Omega_2 \subset \dots$ be an exhaustion of $\Omega$ by bounded mean-convex domains. Theorem 2 of \cite{DHL}  (with $H=0$, and hence with any $r_0$) implies that there exist $u_k\in C^2(\Omega_k)\cap C(\overline{\Omega_k})$ such that $\mathcal{M}(u_k)=0$ in $\Omega_k$ and $u_k|_{\partial \Omega_k}=\varphi$. Applying Theorem 1.1 of \cite{Spruck} and the diagonal method we obtain a subsequence of $u_k$ uniformly convergent on compacts of $\Omega$, converging to a function $u\in C^{2}(\Omega)$ that has minimal graph. 

Since $\partial \Omega$ mean convex, standard arguments guarantee that the solution assumes the desired data on $\partial \Omega$. To conclude the proof one needs to guarantee that it also extends continuously to $\ab \Omega$, hence in the following we construct barriers at infinity.

\

  We say that a function $\Sigma\in C^{0}\left(\Omega\right)$ is a {\em supersolution for $\mathcal{M}$} if, given a bounded subdomain $U\subset \Omega$, if $u\in C^2(U)\cap C^{0}\left(\overline{U}\right)$ is a solution of
$\mathcal{M}=0$ in $U$, the condition $u|_{\partial U}\leq\Sigma|_{\partial U}$ implies that $u\leq\Sigma|_{U}$. We may also define subsolution replacing $\le$ by $\ge$.

  Given $x\in\partial^{ct} \Omega$ and an open subset $V$ such that $x\in\partial^{ct} V \cap \partial^{ct} \Omega$, we call an {\em upper barrier for $\mathcal{M}$ relative to $x$ and $V$ with height $C$} a function $\Sigma\in C(\Omega)$ such that

\begin{enumerate}
\item[(i)] $\Sigma$ is a supersolution for $\mathcal{M}$;
\item[(ii)] $\Sigma\ge 0$ and $\displaystyle\lim_{p\in M,\,p\to x} \Sigma(p) =0$, the limit with respect to the cone topology;
\item [(iii)] $\Sigma_{\Omega\setminus V} \ge C$.
\end{enumerate}
Lower barriers are defined analogously.

\

  A point $x\in \partial^{ct} \Omega$ is said to be regular (w.r.t. the mean curvature operator $\mathcal{M}$) if it  satisfies the following property:  given $C>0$ and a relatively open subset $W\subset\overline{\Omega}^{ct}$ with $x\in W$, there exist an open set $V\subset W$ such that $x\in\operatorname*{Int} \partial^{ct} V\subset \partial^{ct} W$ and an upper barrier $\Sigma:\Omega\rightarrow\mathbb{R}$ relative to $x$ and $V$, with height $C$. 

\

\begin{lemma}
 The function $u$ obtained by the exhaustion argument extends continuously to $\varphi$ to each regular point $x\in \ab \Omega$. 
\end{lemma}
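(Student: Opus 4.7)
The proof is a standard barrier argument that cashes in exactly what regularity of $x$ provides. Fix $\varepsilon>0$ and set $M_0:=\sup_{\partial^{ct}\Omega}|\varphi|$, which is finite because $\partial^{ct}\Omega$ is compact in the cone topology. Continuity of $\varphi$ at $x$ yields a cone-topology open neighborhood $W\subset\overline{\Omega}^{ct}$ of $x$ on which $|\varphi-\varphi(x)|<\varepsilon$. Put $C:=2M_0+\varepsilon$. Regularity of $x$ then furnishes an open set $V\subset W$ with $x\in\Int\partial^{ct}V\subset\partial^{ct}W$ together with an upper barrier $\Sigma$ for $\mathcal{M}$ relative to $x$ and $V$ of height $C$. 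Because $\mathcal{M}(-v)=-\mathcal{M}(v)$, the function $-\Sigma$ serves as a lower barrier with the analogous properties, so regularity alone suffices to produce both barriers.

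Next I would establish $u\le\varphi(x)+\varepsilon+\Sigma$ on $\Omega$ by comparing each exhaustion solution $u_k$ against the supersolution $\varphi(x)+\varepsilon+\Sigma$ on the bounded domain $\Omega_k$. For a boundary point $y\in\partial\Omega_k$, there are two cases. If $y\in V$, then by the choice of $W$, $u_k(y)=\varphi(y)\le\varphi(x)+\varepsilon$, and $\Sigma\ge 0$ closes the inequality. If $y\notin V$, then $\Sigma(y)\ge C=2M_0+\varepsilon$, while the a priori bound $|u_k|\le M_0$ (from the maximum principle on $\Omega_k$) gives $u_k(y)\le M_0\le\varphi(x)+\varepsilon+\Sigma(y)$. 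The supersolution property of $\Sigma$ then propagates the inequality to all of $\Omega_k$; letting $k\to\infty$ and using the uniform convergence on compacta of $u_k$ to $u$ extends the bound to $\Omega$. A symmetric comparison against the subsolution $\varphi(x)-\varepsilon-\Sigma$ yields $u\ge\varphi(x)-\varepsilon-\Sigma$ on $\Omega$.

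Finally, by property (ii) of the barrier, $\Sigma(y)\to 0$ as $y\to x$ in the cone topology, and therefore
\[\varphi(x)-\varepsilon\le\liminf_{y\to x}u(y)\le\limsup_{y\to x}u(y)\le\varphi(x)+\varepsilon.\]
Arbitrariness of $\varepsilon>0$ gives $\lim_{y\to x}u(y)=\varphi(x)$, which is the desired continuous extension.

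\textbf{Where the main difficulty lies.} The delicate step is verifying the boundary inequality on the portion of $\partial\Omega_k$ that sits in the interior of $\Omega$ (the exhaustion's ``caps''), where $u_k$ need not be small near $x$. The height parameter $C$ in the definition of a barrier exists precisely to absorb this: taking $C\ge 2\|\varphi\|_\infty$ forces $\Sigma$ to dominate every value $u_k$ can attain outside $V$, no matter where the caps of the exhaustion happen to lie; without this quantitative control, the argument near an asymptotic boundary point could not be reduced to the bounded-domain comparison.
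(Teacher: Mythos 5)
Your proof is correct and follows essentially the same barrier argument as the paper: continuity of $\varphi$ gives $W$, regularity gives $V$ and $\Sigma$, one compares the exhaustion solutions $u_k$ with $\varphi(x)\pm\varepsilon\pm\Sigma$, and property (ii) of the barrier finishes the job. The only (harmless) deviations are that you run the comparison on all of $\Omega_k$ instead of on $V\cap\Omega_k$ as the paper does, and you take the height $C=2\sup|\varphi|+\varepsilon$ rather than $\sup|\varphi|$ — a slightly more generous choice that cleanly absorbs the sign of $\varphi(x)$ in the case $y\notin V$.
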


 \begin{proof} Let $x\in\ab \Omega$ and $\varepsilon>0$. Since $\varphi$ is continuous, there exists an open neighborhood $W\subset \overline{\Omega}^{ct}$ of $x$ such that $\varphi(y) < \varphi(x)+\varepsilon/2$ for all $y\in W$. Furthermore, the regularity of $x$ implies that there exists an open subset $V\subset W$ such that $x\in\operatorname*{Int}\partial^{ct} V \subset \partial^{ct} W$ and an upper barrier $\Sigma: \Omega \to\mathbb{R}$ with respect to $x$ and $V$ with
height $C:=\max_{\overline{\Omega}} |\varphi|$. 

 Defining $$v(q):=\Sigma(q)+\varphi(x)+\varepsilon,$$ we claim that $u\le v$ in $V$:  In fact, let $V_{k}:=V\cap\Omega_{k}$, $k\ge k_{0}$, where $k_{0}$ is choosen such that $\Omega_{k_{0}}\cap V \neq\emptyset$. Notice that $u_{k}\le v$ in $V_{k}$ because the inequality holds on $\partial V_{k}=\overline{\left(\partial\Omega_{k}\cap V\right) }\cup\overline{\left( \partial V\cap\Omega_{k}\right) }$: on $\partial\Omega_{k} \cap V$, it is true due to the choice of $V$; on $\partial V\cap\Omega_{k}$, it holds because $\Sigma\ge\max|\varphi|$ on $\partial V\cap \Omega$, which implies that $\Sigma\ge u_{k}$, by the Comparison Principle.

Since $u_k\le v$ for all $k\ge k_{0}$, we have $u\le v$ on $V$.

 It is also possible to define $v_{-}:M \to\mathbb{R}$ by $v_{-}(q):=\varphi (x)-\varepsilon- \Sigma(q)$ in order to obtain $u\ge v_{-}$ in $V$. It then holds that $$|u(q) - \varphi(x) | < \varepsilon+ \Sigma(q),\,\forall\, q\in V,$$ and hence $$\limsup_{p\to x} |u(p) - \varphi(x)| \le\varepsilon.$$
The proof is then complete, since $\varepsilon>0$ is arbitrary.
\end{proof}

 To finish, we are now going to prove regularity at the points of $\ab \Omega$. 

\begin{proposition}\label{barrier}
Let $\Omega \subset M$ be a domain that is strictly convex at infinity. Then $\mathcal{M}$ is regular at each point of $\ab \Omega$. 
\end{proposition}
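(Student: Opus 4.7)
The plan is to exhibit, at each $x\in\ab\Omega$, an explicit upper barrier $\Sigma$ using the convex wedge produced by the strict convexity at infinity together with a carefully chosen function of the distance to its inner boundary. The supersolution inequality is verified via Hessian comparison under $K_M\le -1$, essentially mirroring the barrier construction in \cite{RT2} for the asymptotic Dirichlet problem on $\Omega=M$.

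Given $x\in\ab\Omega$, a constant $C>0$, and a cone-topology open neighborhood $W\subset\overline\Omega^{ct}$ of $x$, I would apply Definition~\ref{defsc} with $\Gamma:=W\cap\partial^{ct}\Omega$ to obtain an open set $V\subset\Omega$ such that $x$ lies in the interior of the cone-topology frontier of $V$, $\overline V\cap\partial\Omega\subset\Gamma$, and $N:=\partial V\cap\Omega$ is a $C^2$ hypersurface whose principal curvatures with respect to the unit normal pointing into $\Omega\setminus V$ are non-negative. By shrinking $V$ inside a truncated-cone basis element of the cone topology at $x$, I may assume $V\subset W$, that $x\in\Int\partial^{ct}V\subset\partial^{ct}W$, and that $x$ does not lie in the cone-topology closure of $N$.

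Next, let $\rho(p):=\dist(p,N)$ and define
\[
\Sigma(p):=\begin{cases} C\,\phi(\rho(p)), & p\in V,\\ C, & p\in\Omega\setminus V,\end{cases}
\]
where $\phi:[0,\infty)\to(0,1]$ is a smooth non-increasing function with $\phi(0)=1$, $\phi'(0)=0$ and $\phi(\infty)=0$, to be chosen below. Then $\Sigma\in C^1(\Omega)$, $\Sigma\equiv C$ on $\Omega\setminus V$, yielding (iii); and as $p\to x$ in cone topology, $p$ eventually lies in $V$ with $\rho(p)\to\infty$, thanks to the thin-truncated-cone geometry of $V$ together with the exponential divergence of geodesics under $K_M\le -1$. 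Thus $\Sigma(p)\to 0$, which gives (ii).

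The technical heart is (i). A direct computation for $\Sigma=C\phi(\rho)$ on $V$ gives
\[
\mathcal{M}(\Sigma)=\frac{C\phi''(\rho)}{(1+C^2\phi'(\rho)^2)^{3/2}}+\frac{C\phi'(\rho)\,\Delta\rho}{\sqrt{1+C^2\phi'(\rho)^2}},
\]
and a Riccati comparison for the Weingarten operator of the equidistant hypersurfaces to $N$, combined with the non-negativity of the principal curvatures of $N$ and $K_M\le -1$, yields $\Delta\rho\ge (m-1)\tanh\rho$ in $V$ for $\rho$ bounded away from zero. The main obstacle I foresee is that this bound degenerates near $N$ (where $\Delta\rho$ may even be non-positive, since the principal curvatures of $N$ with respect to $\nabla\rho$ are non-positive), so $\phi$ must be engineered accordingly: I would keep $\phi\equiv 1$ on an initial collar $\{\rho\le s_0\}$ with $s_0$ chosen large enough that $\Delta\rho$ is uniformly positive for $\rho\ge s_0$, and then switch smoothly to an exponential decay $\phi(t)=e^{-\lambda(t-s_0)}$ with $\lambda$ sufficiently small that $\lambda\le(m-1-\varepsilon)(1+C^2\lambda^2\phi^2)$. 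This makes $\Sigma$ match the outer constant $C$ across $N$ with vanishing first derivative and gives $\mathcal{M}(\Sigma)\le 0$ throughout $\Omega$, completing the supersolution verification.
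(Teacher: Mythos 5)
Your construction is essentially the paper's own proof: the paper also takes $V$ from Definition \ref{defsc}, sets $s=\dist(\cdot\,,\partial V\cap\Omega)$, uses $K_M\le -1$ together with the nonnegativity of the principal curvatures of $\partial V\cap\Omega$ to get $\Delta s\ge (n-1)\tanh s$ on $V$ (citing Theorem 4.3 of \cite{Choi}), and then caps a decreasing radial supersolution at the constant $C$ near $\partial V\cap\Omega$ and on $\Omega\setminus V$. The only substantive difference is the profile: the paper uses the exact model solution $g(s)=\int_s^{\infty}\bigl(\cosh^{2(n-1)}t-1\bigr)^{-1/2}\,dt$, which blows up as $s\to 0^+$, so that $\Sigma=\min\{g(s),C\}$ is automatically $\equiv C$ on a collar of $\partial V\cap\Omega$ and one never needs the Laplacian bound there; your truncated exponential achieves the same effect and works equally well. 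Two points deserve correction. First, the parenthetical claim that $\Delta\rho$ ``may even be non-positive'' near $N$ is a sign slip: with the orientation prescribed in Definition \ref{defsc} the Hessian/Riccati comparison gives $\Delta\rho\ge (n-1)\tanh\rho\ge 0$ on all of $V$, so the collar is needed only because $\tanh\rho\to 0$ at $N$, not because of any sign degeneracy --- and your assertion that $\Delta\rho$ is uniformly positive for $\rho\ge s_0$ is in fact justified only by this correctly oriented bound. Second, arranging $V\subset W$ by intersecting $V$ with a truncated cone would adjoin part of the cone's lateral boundary to $\partial V\cap\Omega$ and could destroy the convexity your comparison requires; the paper instead replaces $V$ by a superlevel set $\{s>s_1\}$, whose boundary is an equidistant of the original convex hypersurface and hence still has nonnegative principal curvatures by the same comparison, and which lies in $W$ for $s_1$ large since $\partial^{ct}V\subset\Gamma$. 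With those two repairs your argument is correct and coincides with the paper's.
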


\begin{proof}
Let $x\in \ab \Omega$ and let $W\subset \overline{\Omega}^{ct}$ be an open neighborhood of $x$. By hypothesis, there exist $V\subset \Omega$ open neighborhood of $x$ such that $\overline{V}\cap \partial^{ct} \Omega \subset \partial^{ct} W=:\Gamma$ and $\partial V \cap \Omega$ has nonnegative principal curvatures.  We may assume without loss of generality that $V\subset W$: by Hessian's comparison Theorem, all equidistant hypersurfaces of $\partial V\cap \Omega$ contained in $V$ have nonnegative principal curvature  (oriented to $\Omega\setminus V$); furthermore, since $\partial^{ct}V\subset \Gamma$, it holds that after some finite distance all hypersurfaces equidistant to $\partial V \cap \Omega$ are contained in $W$. 

\
 
Consider $s: V \to \mathbb{R}$ the distance function to $\partial V \cap \Omega$. Since $K_M \le -1$ and all principal curvatures of $\partial V \cap \Omega$ are nonnegative, we have that the Laplacian of $s$ satisfies $\Delta s \ge (n-1)\tanh s$ (see, for instance, Theorem 4.3 of \cite{Choi}). 

 Define $g: (0,+\infty) \to \mathbb{R}$ by 
$$g(s) := \displaystyle\int_s^{+\infty}{\frac{dt}{\sqrt{\cosh^{2(n-1)}t - 1}}}.$$ Notice that $g$ is well-defined and $\lim_{s\to 0^+} g(s) = +\infty$, $\lim_{s\to + \infty} g(s) = 0$. Define now $w: V \to \mathbb{R}$ by $w(p):=g(s(p))$. A straightfoward computation gives

$$\mathcal{M}(w) = (n-1)\cosh^{n-1}s \sinh s + (1-n)\cosh^{1-n}s \Delta s$$ and hence the estimative $\Delta s \ge \tanh s$ leads to $\mathcal{M}(w) \le 0$. 

We remark that $w$ is a solution if $M=\mathbb{H}^n$ and $V$ is a totally geodesic hypersphere.

 To finish with the proof, define the supersolution $\Sigma\in C^{0}\left(\overline{\Omega}\right)$ by
$$ \Sigma(p)=\left\{ \begin{array}[c]{ll} \min\left\{w(p),C\right\}  & \text{if }p\in V\\
C & \text{if }p\in\overline{\Omega}\setminus V, \end{array}\right.$$ 
which is of course an upper barrier relative to $x$ and $V$ with height $C$ and hence the proof is complete.
\end{proof}

\section{Nonexistence result}\label{sectionnonexistence}

We now prove that mean-convexity of $\partial \Omega$ is a necessary condition, as stated  in Theorem \ref{thmnonexistence}. 

 We start with the next classical Lemma, proved by Jenkins and Serrin on \cite{JS} in the case where the domain is bounded and contained on $\mathbb{R}^n$.
\begin{lemma}\label{lemmaJS}
Let $U\subset M$ be an open domain and $\Gamma$ relatively $C^1$ open subset of $\partial U$. If $u\in C(\overline{U})\cap C^2(U\cup \Gamma)$ and $v\in C(\overline{U})\cap C^2(U)$ satisfy 
\begin{eqnarray}
\mathcal{M}(v)<\mathcal{M}(u)\text{ in } U;\\ u\le v \text{ on }\partial U \backslash \Gamma\text{ and } \\ \frac{\partial v}{\partial \nu}=-\infty\text{ in } \Gamma, 
\end{eqnarray}
where $\nu$ is the inner normal vector to $\partial U$, then $u\le v$ in $U$.
\end{lemma}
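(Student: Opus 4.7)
My plan is to argue by contradiction via an integral identity on the super-level set
\[
E_\epsilon := \{p\in U : u(p) - v(p) > \epsilon\}.
\]
Suppose the conclusion fails; then $u>v$ somewhere, and $E_\epsilon$ is non-empty for some $\epsilon>0$. Since $u-v\in C(\overline{U})$ and $u\le v$ on $\partial U\setminus\Gamma$, continuity gives $u-v<\epsilon$ in a neighborhood of $\partial U\setminus\Gamma$, so the closure of $E_\epsilon$ in $\overline{U}$ is disjoint from $\partial U\setminus\Gamma$. Hence the topological boundary of $E_\epsilon$ in $\overline{U}$ splits as $F_\epsilon\cup G_\epsilon$, with $F_\epsilon\subset\{u-v=\epsilon\}\cap U$ an interior level set and $G_\epsilon\subset\Gamma$. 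I would then consider the vector field $X := N_u - N_v$, where $N_w := \nabla w / \sqrt{1+|\nabla w|^2}$, so that $\dive X = \mathcal{M}(u)-\mathcal{M}(v)>0$ strictly on $U$ by hypothesis, and consequently $\int_{E_\epsilon}\dive X\,dV>0$.

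The contradiction would come from the divergence theorem combined with the sign of two boundary integrands. On $F_\epsilon$, the outer unit normal to $E_\epsilon$ is $\mu = -\nabla(u-v)/|\nabla(u-v)|$ (since $u-v$ decreases outward), so
\[
\langle X,\mu\rangle = -\frac{\langle N_u-N_v,\,\nabla u - \nabla v\rangle}{|\nabla(u-v)|}\le 0,
\]
by the monotonicity inequality $\langle N_u-N_v,\nabla u - \nabla v\rangle\ge 0$, which follows from the strict convexity of $\xi\mapsto\sqrt{1+|\xi|^2}$. On $G_\epsilon\subset\Gamma$, the outer normal is $\mu=-\nu$, and the blow-up condition $\partial v/\partial\nu=-\infty$ forces $N_v\to-\nu$ uniformly on compacts of $\Gamma$ as one approaches from $U$; since $|N_u|<1$ pointwise on $U\cup\Gamma$, this gives
\[
\langle X,\mu\rangle = \langle N_v - N_u,\nu\rangle \longrightarrow -1 - \langle N_u,\nu\rangle < 0.
\]
Summing, $\int_{\partial E_\epsilon}\langle X,\mu\rangle\le 0$, contradicting $\int_{E_\epsilon}\dive X\,dV>0$.

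The main obstacle is making the divergence theorem rigorous, since $\partial E_\epsilon$ need not be smooth and $v$ is only $C^2$ on $U$, not $C^1$ up to $\Gamma$. The standard remedy is to choose $\epsilon$ via Sard's lemma so that $\{u-v=\epsilon\}$ is a regular, hence Lipschitz, hypersurface, and to replace $E_\epsilon$ by $E_\epsilon\cap\{\dist(\cdot,\Gamma)>\delta\}$ on which $v$ is $C^2$, apply the theorem there, and let $\delta\to 0^+$. The uniform asymptotic $N_v\to -\nu$ guarantees that the added boundary piece on $\{\dist(\cdot,\Gamma)=\delta\}$ contributes non-positively in the limit, preserving the bound $\int_{\partial E_\epsilon}\langle X,\mu\rangle\le 0$. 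If $U$ is unbounded, I would superimpose a truncation by a large ball $B_R(o)$, invoking an exhaustion argument akin to the one in the proof of Proposition \ref{comparison} to reduce to bounded subdomains and then send $R\to\infty$.
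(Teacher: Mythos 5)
Your flux/divergence argument is a genuinely different route from the paper's, which disposes of the lemma in three lines by a touching-point argument: if $d:=\max_{\Gamma}(u-v)>0$ is attained at $y\in\Int\Gamma$, then $u\le v+d$ on all of $\partial U$, hence on $U$ by the comparison principle; thus $u-v$ attains its maximum over $\overline{U}$ at $y$, so $\frac{\partial}{\partial\nu}(u-v)(y)\le 0$, forcing $\frac{\partial u}{\partial\nu}(y)\le\frac{\partial v}{\partial\nu}(y)=-\infty$ and contradicting $u\in C^2(U\cup\Gamma)$. That argument only ever differentiates in the single direction $\nu$ at the single point $y$, which is exactly what the hypothesis $\partial v/\partial\nu=-\infty$ controls.

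Your version, by contrast, has a genuine gap at the boundary flux estimate on $\Gamma$. The step ``$\partial v/\partial\nu=-\infty$ forces $N_v\to-\nu$ uniformly on compacts of $\Gamma$'' does not follow from the hypotheses. Writing $\nabla v=(\partial v/\partial\nu)\,\nu+\nabla^{T}v$ near $\Gamma$, one has
\begin{equation*}
\langle N_v,\nu\rangle=\frac{\partial v/\partial\nu}{\sqrt{1+(\partial v/\partial\nu)^2+|\nabla^{T}v|^2}},
\end{equation*}
which tends to $-1$ only if the tangential gradient is $o(|\partial v/\partial\nu|)$. Nothing in the statement controls $\nabla^{T}v$: the function $v$ is only $C^2$ in the open set $U$ and continuous up to $\overline{U}$, and the condition $\partial v/\partial\nu=-\infty$ (most naturally read as a statement about one-sided difference quotients of the continuous function $v$, not even about limits of $\nabla v$) constrains only the normal direction. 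Without $\langle N_v,\nu\rangle\to-1$, or at least $\langle N_v,\nu\rangle<\langle N_u,\nu\rangle$ near $G_\epsilon$, you cannot sign the flux of $X$ through $G_\epsilon$; and the same unproved uniformity is what you invoke to control the collar $\{\dist(\cdot,\Gamma)=\delta\}$ as $\delta\to 0^{+}$, so the proposed technical patch inherits the gap rather than closing it. (Your interior estimate on $F_\epsilon$ via monotonicity of $\xi\mapsto\xi/\sqrt{1+|\xi|^2}$ is correct, and the Sard/exhaustion remarks are reasonable, but they do not address this point.) To repair the approach you would either need to add a hypothesis bounding the tangential gradient of $v$ near $\Gamma$, or abandon the flux identity in favor of the one-directional comparison the paper uses.
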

\begin{proof}
If $u\le v$ on $\Gamma$, the result is a consequence of Comparison Principle. 
Suppose by contradiction that there exists $y\in \Int \Gamma$ such that $d:=\max_{\Gamma} (u-v) = u(y)-v(y)>0$. Then $u\le v+d$ on $\partial U$, and hence by the Comparison Principle we have $u\le v+d$ in $U$. Therefore
$$\frac{\partial}{\partial \nu}(u-v)(y) \le 0 \Rightarrow \frac{\partial}{\partial \nu}(u)(y) \le -\infty$$ contradicting the fact that $u\in C^2(U\cup \Gamma)$.
\end{proof}

\begin{lemma}\label{lema1}
Let $\Omega\subset M$ be an open $C^2$ domain (possibly unbounded) with mean curvature (with respect to the inner normal) $H:\partial \Omega\to \mathbb{R}$. Suppose that there exists $y\in \partial\Omega$ such that $H(y)<0$. Then there exists $s>0$ depending only on the local geometry of $\Omega$ near $y$ and $C>0$ depending only on $H(y)$ such that if $u\in C^2(\overline{\Omega})\cap C(\overline{\Omega}^{ct})$ satisfy $\mathcal{M}(u)=0$ in $\Omega$, then 
$$u(y) \le C + \sup_{\partial B_s(y)\cap \Omega} u.$$ 
\end{lemma}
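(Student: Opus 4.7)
The idea is to mimic the barrier construction of Proposition~\ref{barrier}, placing the auxiliary convex hypersurface \emph{inside} $\Omega$ rather than outside. Since $\partial\Omega$ is $C^{2}$ at $y$ and $H(y)<0$, there exist $s_{0}>0$ and $R_{0}>0$, depending only on the local $C^{2}$-geometry of $\partial\Omega$ near $y$, such that (i) the mean curvature of $\partial\Omega$ stays $\le H(y)/2$ on $\partial\Omega\cap\overline{B_{s_{0}}(y)}$, and (ii) an interior geodesic ball of radius $R_{0}$ fits at every point of $\partial\Omega\cap\overline{B_{s_{0}}(y)}$. In particular, fix $p\in\Omega$ with $\overline{B_{R_{0}}(p)}\subset\overline\Omega$ and $y\in\partial B_{R_{0}}(p)\cap\partial\Omega$, with tangent contact at $y$.

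For a small $\delta>0$ to be calibrated in terms of $|H(y)|$, set $\rho(x):=\dist(x,p)-(R_{0}-\delta)$; then $\rho(y)=\delta$ and $\rho\ge\delta$ on $\partial\Omega\cap\overline{B_{s_{0}}(y)}$. Hessian's comparison theorem combined with $K_{M}\le -1$ yields $\Delta\rho\ge(m-1)\tanh\rho$ on $\Omega\setminus\overline{B_{R_{0}-\delta}(p)}$, which is exactly the hypothesis used in the proof of Proposition~\ref{barrier}. Therefore the identical computation gives $\mathcal{M}(w)\le 0$ for $w:=g\circ\rho$, where
\[
  g(t):=\int_{t}^{+\infty}\bigl(\cosh^{2(m-1)}\tau-1\bigr)^{-1/2}\,d\tau,
\]
and $w(y)=g(\delta)$ is finite while $w\to+\infty$ on $\partial B_{R_{0}-\delta}(p)$. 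With $s\in(0,s_{0})$ small, $M:=\sup_{\partial B_{s}(y)\cap\Omega}u$, $v:=w+M$ and $U:=(\Omega\cap B_{s}(y))\setminus\overline{B_{R_{0}-\delta}(p)}$, one has $\partial v/\partial\nu=-\infty$ on $\Gamma:=\partial B_{R_{0}-\delta}(p)\cap\overline U$ and $v\ge M\ge u$ on $\partial B_{s}(y)\cap\overline\Omega\setminus B_{R_{0}-\delta}(p)$.

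The hard step, and the place where the negativity of $H(y)$ is genuinely used, is establishing $v\ge u$ on the remaining boundary piece $\partial\Omega\cap\overline{B_{s}(y)}\setminus B_{R_{0}-\delta}(p)$, where $v$ is bounded above by $g(\delta)+M$ but $u$ carries no a priori bound. I would attack this by performing the same barrier construction at every nearby boundary point (which is possible thanks to the uniform interior radius $R_{0}$ and the uniform sign of $H$ from Steps (i)--(ii)), obtaining pointwise inequalities of the form $u(z)\le g(\delta)+\sup_{\partial B_{s}(z)\cap\Omega}u$ that can be combined via a short covering argument to give a uniform upper bound for $u$ along $\partial\Omega\cap\overline{B_{s}(y)}$ in terms of $M$. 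Feeding this bound into Lemma~\ref{lemmaJS} applied on $U$ with the above $\Gamma$ yields $u\le v$ on $U$; letting $x\to y$ through $U$ gives $u(y)\le g(\delta)+M$, i.e.\ the claim with $C:=g(\delta)$ depending only on $H(y)$. The careful bookkeeping needed to keep $C$ independent of the global geometry of $\Omega$ is the technical improvement over Nitsche's approach mentioned in the introduction.
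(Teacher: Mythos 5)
There is a genuine gap, and it sits exactly where you flag ``the hard step'': bounding $u$ on $\partial\Omega\cap\overline{B_s(y)}$. The covering argument you propose is circular --- each inequality $u(z)\le g(\delta)+\sup_{\partial B_s(z)\cap\Omega}u$ is an instance of the very lemma being proved and requires the same unestablished boundary control at $z$; moreover the spheres $\partial B_s(z)$ for $z$ near $y$ are not contained in $B_s(y)$, so the right-hand sides involve values of $u$ near $\partial\Omega$ outside $B_s(y)$ that are not dominated by $M$, and chaining the estimates accumulates the additive constants. A symptom that the approach cannot work as written: the hypothesis $H(y)<0$ never enters your barrier. The function $g\circ\rho$ is a supersolution at \emph{any} boundary point admitting an interior tangent ball (only $K_M\le-1$ is used), so if the rest of the argument closed, the estimate would hold on mean-convex domains as well; feeding it into Proposition \ref{prop2} and the proof of Theorem \ref{thmnonexistence} would then give nonsolvability there too, contradicting Theorem \ref{thmexistence}. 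A smaller but related misstep: you invoke Lemma \ref{lemmaJS} with $\Gamma=\partial B_{R_0-\delta}(p)\cap\overline{U}$, but there $v\to+\infty$, so $v\notin C(\overline{U})$ and the lemma does not apply as stated (nor is it needed on that piece, where $u\le v$ is automatic); the piece of $\partial U$ that genuinely requires the infinite-normal-derivative mechanism is $\partial\Omega\cap B_s(y)$, where your $v$ is bounded and the lemma is never brought to bear.

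The paper's proof uses a different, and essentially unavoidable, device: the barrier is $w=A\,\psi(d)+\sup_{\partial B_s(y)\cap\Omega}u$ with $d=\dist(\cdot,\partial\Omega)$ and $\psi(t)=\pi/2-\arcsec(t+1)$. The negativity of $H(y)$ enters quantitatively through $\Delta d=-H>\epsilon:=-H(y)/2$ on $B_s(y)\cap\Omega$, which is precisely what makes $\mathcal{M}(w)<0$ for $A$ large depending only on $\epsilon$; and $\psi$ is bounded by $\pi/2$ while $\psi'(0^+)=-\infty$, so Lemma \ref{lemmaJS}, applied on the regions $\Omega_x=\{d>d(x)\}$ with $\Gamma$ the level sets of $d$, disposes of the uncontrolled boundary piece $\partial\Omega\cap B_s(y)$ without ever needing an a priori bound for $u$ there. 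To repair your argument you would have to transplant this mechanism, i.e.\ build the barrier from the distance to $\partial\Omega$ (where mean concavity gives the sign of $\Delta d$) rather than from the distance to an interior ball.
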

\begin{proof}
Let $d:\widetilde{\Omega}\to \mathbb{R}$ be given by $d(x)=\dist(x,\partial \Omega)$, where $\widetilde{\Omega}\subset \Omega$ is the open subset where $d$ is smooth. 

 Since $H(y)<0$, it holds that $\Delta d(y)=-H(y)>0$. Since $\partial \Omega$ is $C^2$, there exists $s>0$ such that $B_s(y)\cap \Omega \subset \widetilde{\Omega}$ and $$\Delta d(x)>-\frac{H(y)}{2}=:\epsilon,\,\,\forall x\in B_s(y)\cap \Omega.$$ This is the required $s$. 

 We claim that if $x\in B_s(y)\cap \Omega$, then $u(x) \le C + \sup_{\partial B_s(y)\cap \Omega} u$. To prove it, let $\Gamma_x$ be the level set of $d$ that contains $x$ and $\Omega_x$ be the set enclosed by $\Gamma_x$ and $\partial B_s(y)$, that is, $\Omega_x:=\{p\in B_s(y)\,|\,d(p)>d(x)\}.$

\

 Consider $\psi$ given by 

\begin{equation}\label{psi}
\psi(t)= \frac{\pi}{2} - \arcsec(t+1).
\end{equation}
 Then $\psi\ge 0$, $\psi(0)=\pi/2$ and $\lim_{t\to + \infty}\psi(t) = 0$. Its first and second derivatives are given below:
$$\psi'(t)= - \frac{1}{(t+1)\sqrt{t^2+2t}};$$
$$\psi''(t) = \frac{1}{(t+1)^2\sqrt{t^2+2t}} + \frac{1}{(t^2 + 2t)^{3/2}}.$$
 Define $w: B_s(y)\cap \Omega_x \to \mathbb{R}$ by 
$$w(p):=A \psi(d(p)) + \sup_{\partial B_s(y)\cap \Omega} u,$$ where $A>0$ is to be determined.
 After some computations we obtain

$$(1+|\nabla w|^2)^{3/2}\mathcal{M}(w)(p) = A \psi''(d(p)) + (A \psi'(d(p)) + A^3\psi'(d(p))^3)\Delta d(p).$$
 Using then that $\Delta d(p)>\epsilon$ and $\psi'<0$ in the domain we are considering, we obtain
\begin{eqnarray*}
(1+|\nabla w|^2)^{3/2}\mathcal{M}(w) &\le& A \left[\psi'' + \epsilon\psi'+\epsilon A^2 \psi'^3\right]\\
&=& A (t+1)^{-3}(t^2+2t)^{-3/2}\left[(t+1)(t^2+2t) + (t+1)^3 \right.\\&&-\left. \epsilon(t+1)^2(t^2+2t) - \epsilon A^2\right].
\end{eqnarray*}
 Notice that the term into the brackets is a polynomial of degree $4$, leader coefficient $-\epsilon<0$ and independent term $1-\epsilon A^2$. Then it is clear that there exists $A>0$ large enough in such a way that this polynomial is negative for all $t\ge 0$; with this choice of $A$ we obtain that $\mathcal{M}(w)<0$ on $\Omega_x$. 

 Furthermore, by definition of $w$ we have $w\ge u$ on $\partial B_s(y)\cap \Omega_x$ and $\partial w/\partial \nu=-\infty$ on $\Gamma_x$, which is an open $C^1$ portion of $\partial \Omega_x$. We also notice that $u\in C^2(\Gamma_x)$. By Lemma \ref{lemmaJS}, we obtain $w\ge u$ in $\Omega_x$. Since $x$ is arbitrary and $u$ is continuous, it holds the desired inequality with $C= A\frac{\pi}{2}$ what concludes the proof. 
\end{proof}

\begin{proposition}\label{prop2}
Let $M$ be a Hadamard manifold with sectional curvature $K_M \le -1$. There exists universal $C>0$ such that if $\Omega$ is a $C^1$ domain of $M$ and $u$ satisfy $\mathcal{M}(u)=0$ in $\Omega$, then
$$\sup_{\partial B_s(y) \cap \Omega} u \le C + \sup_{\partial^{ct} \Omega \backslash B_s(y)} u$$ for all $y\in \partial \Omega$ and $s>0$ such that $\partial B_s(y) \cap \Omega$ is a nonempty connected set.
\end{proposition}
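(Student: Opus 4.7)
My plan is to build, on the unbounded domain $U:=\Omega\setminus\overline{B_s(y)}$, a supersolution of the same shape as the function $w$ used in the proof of Lemma \ref{lema1}, but built from the distance to the point $y$ rather than from the distance to $\partial\Omega$. The reason the resulting constant ends up universal is that the hypothesis $K_M\le -1$ forces, via Hessian comparison,
$$\Delta\rho\ge (m-1)\coth\rho\ge 1 \quad\text{on } M\setminus\{y\},\qquad \rho(p):=\dist(p,y),$$
and this strictly positive lower bound plays the role of the constant $\epsilon$ that appears in the proof of Lemma \ref{lema1}; crucially, it does not depend on $\Omega$, $y$, $s$, or $u$.

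Setting $K:=\sup_{\partial^{ct}\Omega\setminus B_s(y)}u$ (with nothing to prove if $K=+\infty$), letting $\psi$ be the function defined in \eqref{psi}, and reserving a universal constant $A>0$ to be chosen, I will take
$$w(p):=A\,\psi(\rho(p)-s)+K,\qquad p\in U.$$
Then $w$ extends continuously to $\Gamma:=\partial B_s(y)\cap\Omega$ with boundary value $K+A\pi/2$, its inner normal derivative along $\Gamma$ (pointing into $U$, i.e., in the direction $\nabla\rho$) is $-\infty$, and $w(p)\to K$ as $p\to x\in\ab\Omega$ in the cone topology. Using $|\nabla\rho|=1$ and $\mathrm{Hess}\,\rho(\nabla\rho,\cdot)=0$, the computation in Lemma \ref{lema1} transcribes verbatim to
$$(1+|\nabla w|^2)^{3/2}\mathcal{M}(w)=A\psi''(t)+\bigl(A\psi'(t)+A^3\psi'(t)^3\bigr)\Delta\rho,\qquad t=\rho-s,$$
and plugging in $\psi'<0$ and $\Delta\rho\ge 1$ reduces $\mathcal{M}(w)\le 0$ to the negativity, for all $t\ge 0$, of the same degree-four polynomial in $t$ that appears in Lemma \ref{lema1}, with leading coefficient $-1$ and constant term $1-A^2$; this is arranged by choosing $A$ universally large.

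To finish, I apply Lemma \ref{lemmaJS} to $u$ and $w$ on $U$ with the relatively open $C^1$ piece $\Gamma\subset\partial U$ carrying $\partial w/\partial\nu=-\infty$. The remainder of $\partial^{ct}U$ is contained in $\partial^{ct}\Omega\setminus B_s(y)$, where the definition of $K$ yields $\limsup u\le K\le\liminf w$. The conclusion $u\le w$ on $U$, together with the continuity of $u$ up to $\Gamma$, gives $u(p)\le K+A\pi/2$ for every $p\in\partial B_s(y)\cap\Omega$, which is the desired inequality with universal constant $C:=A\pi/2$.

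The one step requiring care is the use of Lemma \ref{lemmaJS} on the unbounded domain $U$: its statement does not restrict $U$, but its proof invokes the bounded comparison principle, so I will need to rerun that argument using Proposition \ref{comparison} in place of the classical version. This is precisely the point where the asymptotic behavior $w\to K$ at $\ab\Omega$ is essential, since it guarantees $\limsup(u-w)\le 0$ on $\partial^{ct}U\setminus\Gamma$ and lets Proposition \ref{comparison} close the argument.
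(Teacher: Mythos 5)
Your proposal is correct and follows essentially the same route as the paper: the same barrier $\psi$ composed with the radial distance (the paper writes it as $B\psi(r)$ with $r=\dist(\cdot,\partial B_s(y))$, which equals your $\rho-s$ on a Hadamard manifold), the same Hessian comparison bound making the constant universal, and the same conclusion via Lemma \ref{lemmaJS} on $\Omega\setminus B_s(y)$. Your extra remark that Lemma \ref{lemmaJS} must be rerun with the unbounded comparison principle (Proposition \ref{comparison}) on this unbounded $U$ is a point the paper passes over silently, but it does not change the argument.
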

\begin{proof}
Consider $v:\Omega \backslash B_s(y) \to \mathbb{R}$ given by $$v(x)=B \psi(r(x)) + \sup_{\partial^{ct} \Omega \backslash B_s(y)} u,$$ where $\psi$ is given by \eqref{psi}, $r(x):= \dist(x,\partial B_s(y))$ and $B$ is an appropriated constant to be chosen latter. Since $K_M\le -1$, we have by the Hessian's Comparison Theorem that $\Delta r \ge n-1$. Hence mimicking the computations of the previous Lemma we obtain the same polyomial, except that we have $n-1$ instead of $\epsilon$ and $B$ instead of $A$. It is again clear that there exists $B$ large enough such that $\mathcal{M}(v)\le 0$. We remark that such constant does not depend on anything (except in the fact that $K_M \le -1$) since we may choose the constant that is appropriate to the case $n=2$ and it works on all dimensions.

 We are again on the hypotheses of Lemma \ref{lemmaJS}, now considering $U=\Omega \backslash B_s(y)$. Hence we obtain, for all $x\in \partial B_s(y) \cap \Omega$, 
$$u(x) \le \sup_{\partial^{ct} \Omega\backslash B_s(y)} u + B\frac{\pi}{2}$$ and the proof is complete.
\end{proof}

\

\begin{proof}[Proof of Theorem \ref{thmnonexistence}]
Combining the estimates obtained on Lemma \ref{lema1} and Proposition \ref{prop2}, we obtain the existence of a continuous function $\varphi:\partial \Omega \to \mathbb{R}$ for which Problem \ref{dpr} is not solvable: it suffices to put $\varphi(y)=\pi(A + B)$, where $A$ and $B$ are given by the previous results, and $\varphi = 0$ on $\partial \Omega \setminus B_s(y)$, where $s$ is given on the proof of Lemma \ref{lema1}. 
\end{proof}

\section{Applications}

\begin{corollary}\label{isolated} 
Let $\Omega$ be a domain that has only finitely many points on $\ab \Omega$. Then Problem \ref{dpr} is solvable for any continuous $\varphi$ if and only if $\Omega$ is mean-convex. 
\end{corollary}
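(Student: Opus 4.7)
The plan is to deduce the corollary directly from Theorem \ref{iff}: since mean convexity is equivalent to solvability of Problem \ref{dpr} for every continuous boundary datum whenever $\Omega$ is strictly convex at infinity, it is enough to show that having only finitely many asymptotic boundary points is already strong enough to force the strict convexity condition of Definition \ref{defsc}. The ``only if'' direction of the corollary is in fact immediate from Theorem \ref{thmnonexistence}, which at any point of $\partial \Omega$ where the mean curvature is negative produces an obstructing continuous boundary value. So the entire content of the corollary lies in verifying Definition \ref{defsc} at each point of $\ab \Omega$.

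Fix $x \in \ab \Omega$ and a relatively open neighborhood $\Gamma \subset \partial^{ct}\Omega$ of $x$. Since $\ab \Omega$ is finite (hence discrete and closed in $\overline{M}$), the point $x$ is isolated in $\ab \Omega$, so I first shrink to a cone-topology open set $U \subset \overline{M}$ with $x \in U$, $U \cap \ab\Omega = \{x\}$ and $U \cap \partial^{ct}\Omega \subset \Gamma$. The candidate for the $V$ of Definition \ref{defsc} is $V := H_c \cap \Omega$, where $H_c := \{b < c\}$ is a deep horoball at $x$ and $b$ is the Busemann function of a geodesic ray representing $x$. Under $K_M \le -1$, Hessian comparison for Busemann functions yields $\mathrm{Hess}(b)(X,X) \ge |X|^2$ for every $X \perp \nabla b$, so the horosphere $\partial H_c$ has principal curvatures bounded below by $1$ with respect to the outward normal $\nabla b$. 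Since $\partial V \cap \Omega = \partial H_c \cap \Omega$ and $\nabla b$ points into $\Omega \setminus V$, the curvature requirement of Definition \ref{defsc} holds automatically.

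To close the argument, I choose $c$ sufficiently negative that $\overline{H_c}^{ct} \subset U$; then $\overline{V}^{ct} \cap \partial^{ct}\Omega \subset \overline{H_c}^{ct} \cap \partial^{ct}\Omega \subset U \cap \partial^{ct}\Omega \subset \Gamma$, which delivers the remaining inclusion in Definition \ref{defsc}. With Definition \ref{defsc} verified at every $x \in \ab\Omega$ and $\Omega$ mean convex by hypothesis, Theorem \ref{iff} applies and completes the proof. The main technical point I anticipate is precisely the cone-topology shrinking used in this last step: one must check that under $K_M \le -1$ the sets $\overline{H_c}^{ct}$ form a neighborhood basis of $x$ in $\overline{M}$ as $c \to -\infty$. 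This should follow from a standard compactness plus comparison argument---any sequence $p_n$ with $b(p_n) \to -\infty$ must escape every compact subset of $\overline{M} \setminus \{x\}$, using convexity of $b$ and the angular characterization of convergence in the cone topology---but it is worth isolating as the one geometric ingredient that converts finiteness of $\ab\Omega$, by itself and without any SC assumption on $M$, into the barrier region required by Definition \ref{defsc}.
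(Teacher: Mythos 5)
Your overall strategy is the same as the paper's: reduce everything to Theorem \ref{iff} by showing that finiteness (hence isolatedness) of $\ab\Omega$ forces strict convexity at infinity. However, your concrete choice of $V$ is wrong, and the error is one of orientation: a horoball $H_c=\{b<c\}$ at $x$ is a \emph{convex} set (it is a sublevel set of the convex Busemann function $b$), whereas Definition \ref{defsc} requires the principal curvatures of $\partial V\cap\Omega$ to be nonnegative when oriented \emph{toward} $\Omega\setminus V$ --- which, as the remark after Definition \ref{defsc} and the proof of Proposition \ref{barrier} make clear, amounts to asking that $\Omega\setminus V$ be (locally) convex, not $V$ itself. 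Your Hessian bound $\mathrm{Hess}\,(b)(X,X)\ge|X|^2$ on $\nabla b^{\perp}$ is correct, but it says the horosphere has principal curvatures $\ge 1$ when oriented by $-\nabla b$, i.e.\ toward the horoball $V$; oriented toward $\Omega\setminus V$ they are $\le -1<0$. The failure is visible directly in the barrier construction of Proposition \ref{barrier}: there one needs $\Delta s\ge(n-1)\tanh s$ for $s$ the distance to $\partial V\cap\Omega$ measured inside $V$, and for your $V$ one has $s=c-b$, hence $\Delta s=-\Delta b\le-(n-1)<0$ (with equality in $\mathbb{H}^n$), so the function $w=g\circ s$ is not a supersolution and no upper barrier results. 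Equivalently: two points of a horosphere are joined by a geodesic lying inside the horoball, so $M\setminus H_c$ is not convex, and $H_c$ cannot witness the SC-type condition.

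The fix is essentially the paper's construction, and it is where the finiteness hypothesis actually enters: since $x$ is isolated in $\ab\Omega$, one may shrink $W$ so that $x$ is its only point at infinity, choose $o\in M\setminus W$, and take $V$ to be (the relevant part of) $\Omega\setminus\overline{B_R(o)}$ for $R$ large. Here $\Omega\setminus V$ contains the convex set $\overline{B_R(o)}$, so $\partial V\cap\Omega\subset S_R(o)$ has principal curvatures $\ge\coth R>0$ in the correct orientation, $s=r-R$ satisfies $\Delta s\ge(n-1)\coth r\ge(n-1)\tanh s$, and the boundary condition $\overline{V}\cap\partial\Omega\subset\Gamma$ holds because the unbounded part of $\partial\Omega$ near $x$ can only accumulate at $x$. (Your auxiliary point --- that $\overline{H_c}^{ct}$ forms a neighborhood basis of $x$ under $K_M\le-1$ --- is plausible but moot, since the horoball must be discarded for the reason above.)
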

\begin{proof}
Notice that since $\ab M$ is compact, then $\ab \Omega$ is also compact and therefore ``finitely many points on $\ab \Omega$'' is equivalent to ``isolated points on $\ab \Omega$''. In order to apply Theorem \ref{iff}, it suffices to prove that $\Omega$ is strictly convex at infinity.

 Let $x\in \ab \Omega$ and $W\subset \overline{\Omega}^{ct}$ a relatively open neighborhood of $x$. We may suppose without loss of generality that $x$ is the only point at infinity of $W$, otherwise we just work with any open subset of $W$ where this property holds. Choosen $o\in M\backslash W$, we have that $W$ is contained on some truncated cone centered at $o$ with radius $R>0$, and as a consequence we have $\partial W \subset  M\setminus B_R(o)$. Set $V:=\Omega\backslash \overline{B_R(o)}$, and it is clear that it satisfies the required conditions.	
\end{proof}

\begin{corollary}
If $M$ satisfies the SC condition and $\Omega$ is a mean-convex domain of $M$ such that $\ab \Omega$ is composed only by open portions and isolated points, then Problem \ref{dpr} is solvable in $\Omega$. In particular, this is the case where if either $\dim M =2$, or $M$ is rotationally symmetric, or 
\begin{equation}
\label{aRcondition}\min\{K_{M}(\Pi);\text{ }\Pi\text{ is a }2-\text{plane in }T_{p}M,\text{ }p\in B_{R+1}(o)\} \geq-\frac{e^{2kR}}{R^{2+2\epsilon}},\,\,R\geq
R^{\ast},
\end{equation}
for some constants $\epsilon,$ $R^{\ast}>0$.
\end{corollary}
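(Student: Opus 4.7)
The plan is to reduce the corollary to Theorem \ref{iff} by verifying that a domain $\Omega$ of the stated type is strictly convex at infinity in the sense of Definition \ref{defsc}. Fix $x \in \ab\Omega$ and a relatively open neighborhood $\Gamma \subset \partial^{ct}\Omega$ of $x$; by hypothesis $x$ is either an isolated point of $\ab\Omega$ or lies in an open portion $\mathcal{O}$ of $\ab\Omega$, and I treat these two cases separately.

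When $x$ is isolated I would simply repeat the argument of Corollary \ref{isolated} verbatim: shrink $\Gamma$ so that $x$ is its only asymptotic point, pick $o\in M$ so that this shrunk $\Gamma$ is contained in a truncated cone of finite radius $R$ based at $o$, and set $V:=\Omega\setminus\overline{B_R(o)}$.

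When $x\in\mathcal{O}$ I would choose a small open neighborhood $W\subset\ab M$ of $x$ whose closure in $\ab M$ is contained in $\mathcal{O}$, and apply the SC condition on $M$ at $x$ relative to $W$: this yields a $C^{2}$ domain $U\subset M$ with $x\in\ab U\subset W$ and $M\setminus U$ convex. Since $U$ may stick out of $\Omega$, the candidate for $V$ in Definition \ref{defsc} is not $U$ itself but the shrunken region $V:=V_{t}:=\{p\in U:\dist(p,\partial U)>t\}$ for $t$ to be chosen large. By Hessian comparison under $K_M\le -1$ the equidistant hypersurface $\partial V_{t}$ has principal curvatures oriented toward $M\setminus V_{t}$ that are at least as large as those of $\partial U$, hence nonnegative, and these agree with the curvatures of $\partial V_{t}\cap\Omega$ oriented toward $\Omega\setminus V_{t}$ required by Definition \ref{defsc}.

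The main obstacle is arranging $V_{t}\subset\Omega$ together with $\overline{V_{t}}^{ct}\cap\partial^{ct}\Omega\subset\Gamma$ for $t$ sufficiently large. Here I would use that the closure of $\ab U$ in $\ab M$ is compact (it sits inside $\overline{W}$) and contained in the open subset $\mathcal{O}\subset\ab\Omega$, so each of its points admits a truncated cone neighborhood in $\overline{M}$ whose finite part is contained in $\Omega$; by compactness finitely many such cones cover a cone-topology neighborhood $N$ of $\overline{\ab U}$ with $N\cap M\subset\Omega$. Since the sets $V_{t}$ cluster on $\ab U$ in the cone topology as $t\to\infty$, one gets $V_{t}\subset N$ for $t$ large, and a further shrinking of $W$ at the outset guarantees that $\overline{N}^{ct}\cap\partial^{ct}\Omega\subset\Gamma$.

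For the \emph{in particular} clause, nothing new is needed: the discussion following Definition \ref{defsci} already records that each of $\dim M=2$, rotational symmetry of $M$, or the curvature pinching \eqref{aRcondition} forces $M$ to satisfy the SC condition, so the main part of the corollary applies directly.
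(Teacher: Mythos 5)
The paper offers essentially no proof of this corollary --- it is stated after Corollary \ref{isolated} and the text only refers to \cite{RT2} for the particular cases --- so the comparison can only be against the evidently intended argument: verify that $\Omega$ is strictly convex at infinity and invoke Theorem \ref{iff} (or just Theorem \ref{thmexistence}, since mean convexity is assumed). Your overall strategy, the treatment of isolated points by the argument of Corollary \ref{isolated}, and the disposal of the ``in particular'' clause via the remarks following Definition \ref{defsci} all match that intention.

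There is, however, one genuine gap in the open-portion case: the claim that every point of $\overline{\ab U}\subset\mathcal{O}$ admits a truncated cone neighborhood whose finite part is contained in $\Omega$. This does not follow from the hypothesis that $\mathcal{O}$ is a relatively open subset of $\ab M$ contained in $\ab\Omega$: the finite boundary $\partial\Omega$ may accumulate in the cone topology at points of $\mathcal{O}$. For example, if $\Omega=\mathbb{H}^3\setminus K$ with $K$ a region bounded by a properly embedded minimal disk asymptotic to a circle $c\subset\ab\mathbb{H}^3$, then $\ab\Omega=\ab\mathbb{H}^3$ is open and $\partial\Omega$ is mean convex, yet no point of $c$ has a cone neighborhood whose finite part misses $\partial\Omega$; in such a situation $V_t\subset\Omega$ fails for every $t$ and your compactness argument collapses. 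The repair is cheap and stays within your framework: Definition \ref{defsc} only asks for some $V\subset\Omega$ with $\overline{V}\cap\partial^{ct}\Omega\subset\Gamma$ and with $\partial V\cap\Omega$ having nonnegative principal curvatures, so take $V:=V_t\cap\Omega$ instead of $V_t$. Then $\partial V\cap\Omega=\partial V_t\cap\Omega$ still inherits the nonnegative curvatures from the Hessian comparison step, and the containment $\overline{V}\cap\partial^{ct}\Omega\subset\Gamma$ follows merely from choosing $W$ inside a truncated cone $N$ with $N\cap\partial^{ct}\Omega\subset\Gamma$ and then $t$ large enough that $\overline{V_t}^{ct}\subset N$ (if not, one finds $p_t\in V_t\setminus N$ converging to a point of $\ab U\setminus N$, contradicting $\overline{\ab U}\subset N$). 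No control of $\partial\Omega$ near $\mathcal{O}$ is needed. One small point then still deserves a line: that $x$ remains in $\partial^{ct}(V_t\cap\Omega)$, i.e.\ that $V_t\cap\Omega$ really is a neighborhood of $x$ in the required sense.
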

The particular cases of Corollary above may be found in \cite{RT2}.

\subsection{Application of the technique: Dirichlet problems for $p$-Laplacians}\label{plaplace}

Consider now the following Dirichlet problem for the $p$-Laplacian operator, $p>1$, for continuous $u$ in the Sobolev space $W^{1,p}(\Omega)$:

\begin{equation}\label{dplap}
\left\{\begin{array}{l}\Delta_p(u):=\dive\left(|\nabla u|^{p-2}\nabla u\right) = 0 \text{ in }\Omega,\\ u|_{\partial \Omega} = \varphi.\end{array}\right.
\end{equation}

Concerning the case $\Omega=M$, the counterexamples constructed by A. Ancona in \cite{Anc} and by A. Borb\'ely in \cite{B2} show that some convexity at infinity is also needed to obtain existence of solutions of asymptotic Dirichlet problems related to the Laplacian operator $\Delta$. In \cite{Ilkka}, I. Holopainen constructed a counterexample for the $p$-Laplacian operator $\Delta_p$. The manifolds constructed by them contain a point in $\ab M$ with the property that any open neighborhood of it has the whole manifold as the convex hull, and hence $M$ is not strictly convex at infinity. 

On the other hand, in \cite{RT2} the authors proved that the SC condition is sufficient for solvability of asymptotic Dirichlet problems w.r.t. $\Delta_p$. We may extend this result to our case, proving that if $\Omega$ is stricly convex at the infinity, then all $x\in\ab \Omega$ is regular with respect to the operator $\Delta_p$. 

The proof is {\em mutatis mutandis} the same as we have done above; it is sufficient to replace $\mathcal{M}$ by $\Delta_p$ and the function $g$ constructed in Theorem \ref{barrier} by the following one:
$$g(s):=c\int_s^{+\infty} \cosh^{\frac{1-n}{p-1}}(t)dt,$$
where $c$ is a sufficiently large constant ($c=2C(\cosh 1 )^{\frac{n-1}{p-1}}$ works good).

Together with the classical theory of existence of solution over bounded domains that satisfy the exterior sphere condition, we obtain the following result.

\begin{theorem}
Let $M$ be a Hadamard manifold with sectional $K_M \le -1$. Let $\Omega\subset M$ be an unbounded domain that is strictly convex at infinity and that satisfies the exterior sphere condition on its finite part, namely, given $x\in\partial \Omega$, there exist a sphere contained in $M\setminus\Omega$ that is tangent to $\partial \Omega$ at $x$. Then given Problem \eqref{dplap} is solvable for any $\varphi \in C(\partial^{ct} \Omega)$.
\end{theorem}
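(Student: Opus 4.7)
The plan is to follow, \emph{mutatis mutandis}, the proof of Theorem \ref{thmexistence}, replacing $\mathcal{M}$ by $\Delta_p$ and the radial barrier function by
$$g(s)=c\int_s^{+\infty}\cosh^{\frac{1-n}{p-1}}(t)\,dt.$$
The first step is to establish a Comparison Principle for $\Delta_p$ on unbounded domains in direct analogy with Proposition \ref{comparison}: cone-topology compactness of $\ab U$ reduces the matter to the classical weak comparison principle for the $p$-Laplacian on a bounded truncation $U\cap B_R(o)$. Using it, I would approximate $\varphi\in C(\partial^{ct}\Omega)$ from above and below by monotone smooth data $\varphi_n^\pm$ and reduce to the smooth case.

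Next, I would exhaust $\Omega$ by bounded smooth subdomains $\Omega_k\subset\Omega_{k+1}$ and solve the Dirichlet problem $\Delta_p u_k=0$ on each $\Omega_k$ with a smooth extension of $\varphi_n^\pm$ as boundary data; classical theory for $\Delta_p$ on bounded domains satisfying the exterior sphere condition furnishes such solutions. The comparison principle yields monotonicity in $k$, and interior $C^{1,\alpha}$ regularity for $p$-harmonic functions produces a locally uniformly convergent subsequence whose limit $u^{\pm,n}$ is $p$-harmonic in $\Omega$. At finite points of $\partial\Omega$, the exterior sphere condition provides standard Serrin-type barriers built from the distance to the center of the tangent ball, which show that $u^{\pm,n}$ attains the boundary data continuously along $\partial\Omega$.

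The third step is continuity at $\ab\Omega$, which is where the new $g$ enters. Fix $x\in\ab\Omega$; strict convexity at infinity yields an open $V\subset\Omega$ with $x\in\partial^{ct} V$, $\partial V\cap\Omega$ having non-negative principal curvatures, and the Hessian Comparison Theorem under $K_M\le -1$ gives $\Delta s\ge(n-1)\tanh s$ for $s:=\dist(\,\cdot\,,\partial V\cap\Omega)$. Since $|\nabla s|=1$, a direct computation yields
$$\Delta_p w=|g'(s)|^{p-2}\bigl[(p-1)g''(s)+g'(s)\Delta s\bigr]$$
for $w(p):=g(s(p))$. With $g'(s)=-c\cosh^{(1-n)/(p-1)}(s)$ and $(p-1)g''(s)=c(n-1)\cosh^{(1-n)/(p-1)-1}(s)\sinh s$, the bound on $\Delta s$ makes the bracket nonpositive, so $\Delta_p w\le 0$. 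Choosing $c$ as in the hint ensures $w\ge\sup|\varphi|$ near $\partial V\cap\Omega$, and truncating $w$ at that height outside a neighborhood of $\partial V\cap\Omega$ produces an upper barrier at $x$ of the type demanded by the regularity definition in Section \ref{existence}. Standard barrier arguments (lifted verbatim from the minimal-surface case) then give continuity of $u^{\pm,n}$ at $x$, and letting $n\to\infty$ and using the comparison principle one more time produces the desired solution $u$ with boundary values $\varphi$.

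I expect the main obstacle to be the exhaustion bookkeeping rather than the barrier computation itself. Unlike $\mathcal{M}$, the operator $\Delta_p$ lacks the sharp global interior gradient estimates of Spruck used in Theorem \ref{thmexistence}; one must instead invoke Ladyzhenskaya--Ural'tseva-type $C^{1,\alpha}$ regularity and verify that the exterior sphere condition is inherited uniformly by the approximating $\Omega_k$ so that the finite-boundary barriers pass to the limit. Once these details are set up, the Comparison Principle forces uniqueness of the limit and completes the proof.
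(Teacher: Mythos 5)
Your proposal is correct and follows essentially the same route as the paper, which itself only sketches the argument as \emph{mutatis mutandis} the proof of Theorem \ref{thmexistence} with $g(s)=c\int_s^{+\infty}\cosh^{\frac{1-n}{p-1}}(t)\,dt$ replacing the minimal-surface barrier and classical bounded-domain theory under the exterior sphere condition handling the finite boundary. Your explicit computation $\Delta_p w=|g'(s)|^{p-2}\bigl[(p-1)g''(s)+g'(s)\Delta s\bigr]\le 0$ and the observation that $c$ must be taken large because this $g$ (unlike the minimal-surface one) stays bounded as $s\to 0^+$ supply exactly the details the paper leaves implicit.
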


\end{document}